\newcommand{\gf}{ {{\mathbb F}} }
\newtheorem{lemma}{Lemma}[section]
\newtheorem{theorem}{Theorem}[section]
\newtheorem{remark}[theorem]{Remark}
\journal{Finite Field and Their Applications}
\begin{document}

\begin{frontmatter}



\title{The compositional inverses of three classes of permutation polynomials over finite fields}


\author[wuaddress]{Danyao Wu\corref{mycorrespondingauthor}}
\cortext[mycorrespondingauthor]{Corresponding author}
\ead{wudanyao111@163.com}

\author[yuanaddress]{Pingzhi Yuan}
\ead{yuanpz@scnu.edu.cn}
\author[guanaddress]{Huanhuan Guan} 
\ead{guan1110h@163.com}

\author[liaddress]{Juan Li}
\ead{41170208@qq.com}

\address[wuaddress]{School of Computer Science and Technology,
	Dongguan University of Technology, Dongguan 523808, China}
\address[yuanaddress]{School of Mathematics, South China Normal University, Guangzhou 510631, China}
\address[guanaddress]{School of Mathematics and Statistics, Guizhou University of Finance and Economics, Guiyang 550025, China}
\address[liaddress]{School of Mathematics, Jiaying University, Meizhou, 514015, China}

\begin{abstract}
R. Gupta, P. Gahlyan and R.K. Sharma presented three classes of permutation trinomials over $\gf_{q^3}$ in Finite Fields and Their Applications.  In this paper, we  employ the local method  to prove that those polynomials  are indeed permutation polynomials and provide their compositional inverses.
\end{abstract}



\begin{keyword}


Finite field \sep Polynomial \sep Permutation polynomial \sep Compositional inverse
\MSC 11T06 \sep 11C08 \sep 12E10
\end{keyword}

\end{frontmatter}


\section{Introduction}
\label{}
Let  $\gf_q$ be the finite field with $q$ elements,  where $q$ is a prime power, and
let $\gf_q[x]$
be the ring of polynomials in a single indeterminate $x$ over $\gf_q$. A polynomial
$f \in\gf_q[x]$ is called a {\em permutation polynomial} (PP) of $\gf_q$ if its
associated polynomial mapping $f: c\mapsto f(c)$ from $\gf_q$ to itself is a bijection. The unique polynomial denoted by $f^{-1}(x)$ over $\gf_q$
such that $f(f^{-1}(x))\equiv f^{-1}(f(x)) \equiv x \pmod{x^q-x}$ is called the compositional inverse of $f(x).$ Furthermore,  $f(x)$ is called  an involution when $f^{-1}(x)=f(x).$

Permutation polynomials over finite fields have been a fascinating
subject of study for many years, with applications in coding
theory \cite{DH13,LC07}, cryptography \cite{RSA,SH}, combinatorial
designs \cite{DY06}, and various other fields
of mathematics and engineering. Detailed information regarding the properties,
constructions, and applications of permutation polynomials can be
found in the works of  Lidl and Niederreiter \cite{LR97}, as well as  Mullen \cite{Mull}. Therefore, discovering new PPs is of great interest in both theoretical and practical aspects.
Some recent advancement in PPs can be found in \cite{AGW,CC17,DY06,Hou151,LWLLZ18,TZJ15,WY22,XLC22,YD,YZ,yuan2022permutation}.

Finding explicit compositional inverse of a random permutation polynomial is a particularly challenging task in determining permutation polynomials.
Fortunately, one of the co-authors presented the local method in \cite{yuan2022local} for determining permutation polynomials and their compositional inverses. 
The work of this coauthor \cite{yuan2022local} inspires us to  seek the compositional inverses of some known permutation polynomials. 

Recently,  R. Gupta, P. Gahlyan and R.K.Sharma \cite{Gupta2022new} presented three new classes of permutation trinomials over $\gf_{q^3}$ by the multivariate method and the resultant of two polynomials. Actually, they had  
derived the compositional inverses of those classes of permutation polynomials by factoring those resultants, but they did not explicitly list the formulars of the inverse permutations.
Computing those resultants  and factoring them is a challenging task.  In this paper, we employ the local method  \cite{yuan2022local} to prove that those polynomials \cite{Gupta2022new} are indeed permutation polynomials and provide their compositional inverses.

The remainder of this paper is organized as follows. In Section 2, we provide a brief overview of the local method, which we use throughout the rest of the paper. In Section 3, we prove three classes of permutation polynomials \cite{Gupta2022new} are indeed permutation polynomials over finite fields  and provide their compositional inverses  by the local method. 

\section{The main Lemma}
In this section, we present the local method about how to find a permutation  polynomial and to compute its compositional inverse over $\gf_{q^n}$ simultaneously. For the convenience of the readers, we provide the proof here again.

\begin{lemma}\label{ff-}\cite[Theorem 2.2]{yuan2022local}
	Let $q$ be a prime power and $f(x)$ be a polynomial over $\gf_q.$ Then
	$f(x)$ is a permutation polynomial if and only if there exist nonempty 
	subsets $S_i$, $i=1, 2, \cdots, t$ of $\gf_q$ and maps $\psi_i: \gf_q \rightarrow S_i$, $i=1, 2, \cdots, t$ such that $\psi_i\circ f(x)=\varphi_i,$ $i=1, 2, \cdots, t$
	and $x=F(\varphi_1, \varphi_2, \cdots, \varphi_t),$ where $F(x_1, x_2, \cdots, x_t)\in \gf_q[x_1, x_2, \cdots, x_t].$ Moreover, the compositional inverse of $f$ is given by
	$$f^{-1}(x)=F(\psi_1, \psi_2, \cdots, \psi_t)$$
\end{lemma}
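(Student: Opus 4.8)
The plan is to prove the two implications of the biconditional separately, then read off the inverse formula from the constructive direction.

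For the ``if'' direction, suppose we are given the subsets $S_i$, the maps $\psi_i$, the functions $\varphi_i = \psi_i\circ f$ on $\gf_q$, and a polynomial $F$ with $x = F(\varphi_1(x),\ldots,\varphi_t(x))$ for all $x\in\gf_q$. To show $f$ is a permutation polynomial it suffices to show $f$ is injective on $\gf_q$. So suppose $f(a) = f(b)$ for $a,b\in\gf_q$. Applying each $\psi_i$ gives $\varphi_i(a) = \psi_i(f(a)) = \psi_i(f(b)) = \varphi_i(b)$ for every $i$, and then
$$a = F(\varphi_1(a),\ldots,\varphi_t(a)) = F(\varphi_1(b),\ldots,\varphi_t(b)) = b.$$
Hence $f$ is injective, and since $\gf_q$ is finite, $f$ is a bijection.

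For the ``only if'' direction, assume $f$ is a permutation polynomial. Here one must exhibit the data; the natural (in fact minimal) choice is $t = 1$, $S_1 = \gf_q$, and $\psi_1 = f^{-1}$, the compositional inverse of $f$, which exists precisely because $f$ permutes $\gf_q$. Then $\varphi_1 = \psi_1\circ f = f^{-1}\circ f$ is the identity map on $\gf_q$, which is represented by the polynomial $x$ (reduced mod $x^q - x$), so taking $F(x_1) = x_1$ gives $x = F(\varphi_1)$ as required. This direction is essentially trivial once one allows $t=1$; the content of the lemma is really the ``if'' direction together with the inverse formula.

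Finally, for the inverse formula in the constructive direction: we must check $f^{-1}(x) = F(\psi_1(x),\ldots,\psi_t(x))$, i.e. that the right-hand side composed with $f$ (on either side) is the identity. Composing with $f$ from the inside, for any $x\in\gf_q$ we have $\psi_i(f(x)) = \varphi_i(x)$, so $F(\psi_1(f(x)),\ldots,\psi_t(f(x))) = F(\varphi_1(x),\ldots,\varphi_t(x)) = x$; this shows $F(\psi_1,\ldots,\psi_t)$ is a left inverse of $f$ as a map, and since $f$ is a bijection of a finite set, a left inverse is the two-sided inverse. The only mild subtlety to be careful about is the distinction between polynomials and the functions they induce on $\gf_q$ (everything is an identity of maps on $\gf_q$, equivalently an identity of polynomials modulo $x^q-x$), and the fact that each $\varphi_i$, being a map $\gf_q\to\gf_q$, is represented by some polynomial so that $F(\varphi_1,\ldots,\varphi_t)$ makes sense as a polynomial expression. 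I do not anticipate a genuine obstacle here; the argument is short, and the only thing to state cleanly is the finite-set left-inverse-equals-inverse step.
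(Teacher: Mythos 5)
Your proposal is correct and follows essentially the same route as the paper: the same trivial choice $t=1$, $S_1=\gf_q$, $\psi_1=f^{-1}$ for the ``only if'' direction, and the same composition identity $F(\psi_1(f(x)),\ldots,\psi_t(f(x)))=x$ for the converse and the inverse formula. The only difference is that you explicitly spell out the injectivity/left-inverse-on-a-finite-set step, which the paper leaves implicit.
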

\begin{proof}
	If there exist nonempty  subsets  $S_i$, $i=1, 2, \cdots, t$ of $\gf_q$ and maps $\psi_i(x): \gf_q \rightarrow S_i$, $i=1, 2, \cdots, t$ such that $\psi_i(x)\circ f(x)=\varphi_i(x),$ $i=1, 2, \cdots, t$
	and $x=F(\varphi_1(x), \varphi_2(x), \cdots, \varphi_t(x)),$ then 
	$$F\left(\psi_1(f(x)), \psi_2(f(x)), \cdots, \psi_t(f(x))\right)=x.$$
	This implies 
	$$f^{-1}(x)=F\left(\psi_1(x), \psi_2(x), \cdots, \psi_t(x)\right).$$
	If $f(x)$ is a permutation polynomial over $\gf_q$, we can take $S=\gf_q,$ $i=1, $ $\psi(x)=f^{-1}(x)$, $\varphi(x)=x$ and $F(x)=x,$ then all conditions are satisfied, and we have $f^{-1}(x)=\psi(x).$ This completes the proof. 
\end{proof}
In order to derive the compositional inverses of the permutations studied by  R. Gupta et al. in  \cite{Gupta2022new}, we introduce  the resultant of two polynomials.

Let $f(x)=a_0x^n+a_1x^{n-1}+\cdots+a_n\in \gf_{q}[x]$ and $g(x)=b_0x^m+b_1x^{m-1}+\cdots+b_m\in \gf_{q}[x]$ be two polynomials of degree $n$ and $m$, respectively, with $n, m \in \mathbb{N}.$ Then the resultant $R(f, g, x)$ of the two polynomials with respect to $x$  is defined by the determinant

\begin{equation*}
	R(f, g, x)=\left|\begin{matrix}
		a_0& a_1&\cdots&a_n& 0& 0&\cdots&0\\
		0 & a_0& a_1&\cdots&a_n& 0&\cdots&0\\
		\vdots&&&&&&&\vdots\\
		0&\cdots&\cdots&\cdots&a_0& a_1&\cdots&a_n\\
		b_0& b_1&\cdots&b_m& 0& 0&\cdots&0\\
		0 & b_0& b_1&\cdots&b_m& 0&\cdots&0\\
		\vdots&&&&&&&\vdots\\
		0&\cdots&\cdots&\cdots&b_0& b_1&\cdots&b_m
	\end{matrix}\right|
\end{equation*}
of order $m+n.$ 

If $f(x)=a_0(x-\alpha_1)(x-\alpha_2)\cdots(x-\alpha_n),$ where $a_0\neq0, $ in the splitting field of $f$ over $\gf_q,$ then $R(f, g, x)$ is also given by 
the formula 
$$R(f, g, x)=a_0^m\prod_{i=1}^{n}g(\alpha_i).$$
Then $R(f, g, x)=0$ if and only if $f$ and $g$ have a common divisor in $\gf_{q}$ of positive degree.

\section{ Permutation trinomials and their compositional inverses over $\gf_{q^{3}}$}
\begin{theorem}\label{th21}
	For a positive integer $m$, let $q=2^m$ and $A \in \gf_{q}^*$ with $A^3=1.$    Then  the polynomial $f_1(x)=x+A x^{q^2-q+1}+x^{q^2+q-1}$ is a permutation polynomial over  $\gf_{q^3}$ if and only if $m\not\equiv 2 \pmod{3}.$ Moreover, if $f_1(x)$ is a permutation polynomial over $\gf_{q^3}$, then the compositional inverse of $f_1(x)$ is 
	
	{\footnotesize
		\begin{equation*}
			f_1^{-1}(x)=\begin{cases}
				\frac{(A x^{q^2+1}+x^{2q^2}+A x^{2q})^{q+1}(x+A x^q+ A^2 x^{q^2})}{(x+A x^q+ A^2 x^{q^2})\circ (A x^{q^2+1}+x^{2q^2}+A x^{2q})^{q+1} }, \,\, & \text{if \,\,$x+A x^q+A^2x^{q^2}\neq0;$}\\
				\frac{x^{q^2+2q+1}}{x^{q^2+2q}+A x^{2q+1}+x^{2q^2+1}},  \,\, &\text{if \,\,$x+A x^q+A^2x^{q^2}=0$\, and \, $x\neq 0;$}\\
				0, \,\, &\text{if \,\,$x=0$.}\\
			\end{cases}
	\end{equation*}}

\end{theorem}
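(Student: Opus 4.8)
The plan is to apply Lemma~\ref{ff-} with two carefully chosen auxiliary maps. Put $L(z)=z+Az^q+A^2z^{q^2}$ and $M(z)=Az^{q^2+1}+z^{2q^2}+Az^{2q}$; since $A\in\gf_q$ and $A^3=1$ we have $A^q=A$, $A^2=A^{-1}$, and a one-line expansion gives $M(z)=L(z)^{q+1}+A^2zL(z)+z^{q+1}$ and $L(z)^q=L(z^q)=A^2L(z)$. I will also use the factorisations $f_1(x)=x\,g(x^{q-1})$ with $g(t)=1+At^q+t^{q+2}$ and $L(x)=x\,\ell(x^{q-1})$ with $\ell(t)=1+At+A^2t^{q+1}$, valid for $x\neq0$ (the case $x=0$, with $f_1(0)=0$, is trivial and matches the last branch of the stated formula). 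The first step is the localisation $L\circ f_1=L$: for $x\neq0$, writing $x^{q^2-q+1}=x^{q^2}x\,x^{-q}$ and $x^{q^2+q-1}=x^{q^2}x^q\,x^{-1}$ turns $f_1(x),f_1(x)^q,f_1(x)^{q^2}$ into sums of monomials in $x,x^q,x^{q^2}$ with coefficients in $\{1,A,A^2\}$, and in $L(f_1(x))=f_1(x)+Af_1(x)^q+A^2f_1(x)^{q^2}$ every monomial except $x,Ax^q,A^2x^{q^2}$ occurs twice and cancels in characteristic~$2$ (using $A^3=1$), leaving $L(x)$. Thus $f_1$ stabilises every fibre of $L$, and we obtain the first local datum $\psi_1=L$, $\varphi_1=L$.

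Next I would produce the second datum. From $M(z)=L(z)^{q+1}+A^2zL(z)+z^{q+1}$, the relation $L\circ f_1=L$, and the factorisations above, one computes $M(f_1(x))=x^2B(x^{q-1})$ with $B(t)=t\,\ell(t)^{q+1}+t\,g(t)^{q+1}+A^2g(t)\ell(t)$, hence $\varphi_2(x):=M(f_1(x))^{q+1}=x^{2q+2}B(x^{q-1})^{q+1}$ and $\psi_2(z):=M(z)^{q+1}$ satisfies $\psi_2\circ f_1=\varphi_2$. The heart of the argument is the identity $x\,L(\varphi_2(x))=\varphi_2(x)\,L(x)$; cancelling the common power of $x$ turns it into a functional identity in the single variable $u=x^{q-1}$ over the group $\mu_{q^2+q+1}$ of $(q^2+q+1)$-th roots of unity (reduce every exponent modulo $q^3-1$, i.e.\ use $u^{q^2+q+1}=1$), which is verified by direct expansion, once more via $A^3=1$. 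Granting it, for $x\notin\ker L$ we get
\[
x=\frac{\varphi_2(x)\,\varphi_1(x)}{L(\varphi_2(x))}=\frac{\psi_2(f_1(x))\,\psi_1(f_1(x))}{L\big(\psi_2(f_1(x))\big)},
\]
which is exactly the first branch of the claimed $f_1^{-1}$ (writing $z^{-1}=z^{q^3-2}$ makes the reconstruction a genuine polynomial $F$ as in Lemma~\ref{ff-}).

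It remains to treat $\ker L\setminus\{0\}$ and to establish the ``iff''. On $\ker L$ one has $x^{q^2}=Ax+A^2x^q$ and (by the first step) $f_1$ maps $\ker L$ to itself, with $f_1(x)=\big(A^2x^3+Ax(x^q)^2+A^2(x^q)^3\big)x^{-(q+1)}$; inverting this by the same bookkeeping gives the second branch. Assembling the three branches and invoking Lemma~\ref{ff-} shows $f_1$ is a permutation polynomial with the asserted inverse \emph{provided} none of the occurring denominators vanishes, and I would prove this non-vanishing is equivalent to $m\not\equiv2\pmod3$. On one side, $m\not\equiv2\pmod3$ forces $g(t)$ (equivalently $N(t):=A+t^{q+1}+t^{2q}$, which has the same zeros on $\mu_{q^2+q+1}$) and the denominator of the second branch to have no zero on $\mu_{q^2+q+1}$. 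On the other side, if $m\equiv2\pmod3$ then $g$ acquires a zero $\zeta\in\mu_{q^2+q+1}$ — e.g.\ for $A=1$ one has $7\mid q^2+q+1$, so $\gf_8^{*}\subseteq\mu_{q^2+q+1}$, and on $\gf_8^{*}$ the polynomial $g$ equals $(t^3+t^2+1)^2$, whose roots lie in $\gf_8$ — and then $x_0$ with $x_0^{q-1}=\zeta$ gives $f_1(x_0)=x_0g(\zeta)=0=f_1(0)$ with $x_0\neq0$, so $f_1$ is not injective.

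The main obstacle will be the computation underlying the second step: verifying $x\,L\big(M(f_1(x))^{q+1}\big)=M(f_1(x))^{q+1}L(x)$ as an identity on $\mu_{q^2+q+1}$ is the one genuinely lengthy manipulation, and it is tied to the delicate bookkeeping in the last step — namely pinning down precisely why $m\not\equiv2\pmod3$ keeps $g$, $N$, and the auxiliary denominators zero-free on $\mu_{q^2+q+1}$, while $m\equiv2\pmod3$ produces such a zero (which hinges, as above, on whether the roots of the relevant twist of $g$ fall inside $\gf_8$ or $\gf_4$).
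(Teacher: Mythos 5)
Your localisation data coincide with the paper's: your $L$ is the paper's relation $x+Ay+A^2z=a+Ab+A^2c$, your $M$ evaluated at $a=f_1(x)$ is exactly the paper's quantity $Aac+c^2+Ab^2$ (with $b=a^q$, $c=a^{q^2}$), and your reconstruction $x=\varphi_2\varphi_1/L(\varphi_2)$ is precisely the paper's identity $L\bigl(M^{q+1}\bigr)x=M^{q+1}(a+Ab+A^2c)$. That identity is true, but two remarks on how you propose to get it and use it. First, you leave it as ``verified by direct expansion'' on $\mu_{q^2+q+1}$; the paper instead derives it in a few lines by eliminating $yz$ between the relation $(Ab+A^2c)xy+(Aa+A^2b+c)xz+byz=0$ and its $q$-th power, obtaining $My=M^qz=M^{q^2}x$ and then multiplying the linear relation by $M^{q+1}$. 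A brute-force expansion of $L\bigl(M(f_1(x))^{q+1}\bigr)$ is not realistically a ``one lengthy manipulation,'' and you have not actually carried it out, so as written the heart of your argument is an unproved assertion.

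The genuine gap, however, is elsewhere: even granting the identity, the first-branch formula degenerates to $0/0$ whenever $M(f_1(x))=0$, so you must prove $Aac+c^2+Ab^2\neq0$ for every $a\in\gf_{q^3}^*$ (with $b=a^q$, $c=a^{q^2}$). This is where the hypothesis $m\not\equiv2\pmod3$ actually does its work, and it is the bulk of the paper's proof: assuming $M=0$ and combining with its Frobenius conjugates leads to $A^2=u^{q+3}$ with $u=a^{q-1}$, then $u^{3(2q-1)}=1$, then $\gcd(2q-1,q^3-1)=\gcd(2q-1,7)=1$ (this is exactly where $2q\not\equiv1\pmod7$, i.e.\ $m\not\equiv2\pmod3$, is used) forces $u^3=1$, and a case analysis on $A=1$ versus $A$ primitive in $\gf_4$ yields a contradiction. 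Your proposal never identifies this condition: your non-vanishing discussion is about the zeros of $g(t)=1+At^q+t^{q+2}$ on $\mu_{q^2+q+1}$, which governs whether $f_1$ has nonzero roots (a separate, also necessary fact, which the paper simply quotes from Gupta et al.), not about the vanishing of $M$ on the image values. Without this step the reconstruction map $F$ required by Lemma~\ref{ff-} is not defined on part of its domain, and the proof is incomplete. Your ``only if'' sketch via the factorisation of $g$ over $\gf_8$ also only covers $A=1$, though the paper likewise defers that direction to the cited reference, so that omission is minor by comparison.
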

\begin{proof}As it has been shown in \cite{Gupta2022new}  that if $m\equiv 2 \pmod{3},$ then $f_1(x)$ is not a permutation polynomial over $\gf_{q^3}$ and  $f_1(x)$ has a unique  root $0$ in $\gf_{q^3}.$  Now, we only need to show  that if $m \not \equiv 2 \pmod{3}, $  then $f_1(x)$ permutes $\gf_{q^3}^*.$
	
	Let $\psi_1(x)=x, \psi_2(x)=x^q, \psi_3(x)=x^{q^2}, $ $ \varphi_1(x)=\psi_1(x)\circ f_1(x)=f_1(x), \varphi_2(x)=\psi_2(x)\circ f_1(x)=f_1^q(x)$, and $  \varphi_3(x)=\psi_3(x)\circ f_1(x)=f_1^{q^2}(x).$ For simplicity, put $\varphi_1(x)=a, \varphi_2(x)=b, \varphi_3(x)=c.$
	Then  $c=b^q=a^{q^2}.$ We  assume that $x\neq 0$  so that  $abc\neq0.$ By substituting $y=x^q,$ $z=y^q, $ we obtain the system of equations 
	\begin{equation}\label{eq2}
		\begin{cases}
			x+\frac{A xz}{y}+\frac{yz}{x}=a,\\
			y+\frac{A xy}{z}+\frac{xz}{y}=b,\\
			z+\frac{A yz}{x}+\frac{xy}{z}=c,\\
		\end{cases}
	\end{equation}
	which can be rewritten as 
	\begin{equation}\label{eq1}
		\begin{cases}
			x^2y+A x^2z+y^2z=axy,\\
			y^2z+A y^2x+z^2x=byz,\\
			z^2x+A z^2y+x^2y=czx.\\	
		\end{cases}
	\end{equation}
	Since $A^3=1$, it follows from \eqref{eq2} that 
	\begin{equation}\label{abc}
		x+A y+A ^2 z=a+A b+A^2 c.
	\end{equation} 
	By adding the first two equations of \eqref{eq1}, along with  Eq.\,\eqref{abc},  we get 
	\begin{align}\label{eq4}
		0=&\, x^2y+A x^2z+axy	+A y^2x+z^2x+byz\nonumber \\
		=&\, xy(x+A y+a)+xz(A x+z) +byz \nonumber\\
		=&\, xy(A b+A^2c+A^2z)+xz(A a+A^2b+c+A ^2y)+byz \nonumber\\
		=&\,(A b+A^2c)xy+(A a+A^2b+c)xz+byz.		
	\end{align}
	By raising both sides of Eq.\,\eqref{eq4}  to the $q$-th power, we obtain  
	\begin{equation}\label{eq41}
		(A b+A^2c)^qyz+(A a+A^2b+c)^qxy+cxz=0.
	\end{equation}
	Eliminating $yz$ from the system composed of Eqs.\,\eqref{eq4} and \eqref{eq41},
	we obtain
	$$(A ac+c^2+A b^2)xy=(A ac+c^2+A b^2)^qxz,$$ or
	\begin{equation}\label{eq5}
		(A ac+c^2+A b^2)y=(A ac+c^2+A b^2)^qz
	\end{equation} because  $x\neq 0.$
	
	We now prove  $A ac+c^2+A b^2\neq 0.$ Indeed, suppose otherwise. Then
	\begin{equation}\label{eq9}
		A ac+c^2+A b^2= 0.
	\end{equation}	
	By raising both sides of \eqref{eq9} to the $q$-th power and $q^2$-th power, respectively, together with \eqref{eq9}, 
	we have the following system of equations
	\begin{equation}\label{eq14}
		\begin{cases}
			A ac+c^2+A b^2=0,\\
			A ab+a^2+A c^2=0,\\
			A bc+b^2+A a^2=0.\\
		\end{cases}
	\end{equation}
	This implies that 
	\begin{equation*}
		\begin{cases}
			A bc+b^2+A a^2=0,\\
			A ^2 ac+A ab+bc=0,
		\end{cases}
	\end{equation*}
	or \begin{equation}\label{eq10}
		\begin{cases}
			b(A c+b)	+A a^2=0,\\
			A a (A c+b)+bc=0,
		\end{cases}
	\end{equation}
	as $A^3=1.$
	Since $abc\neq 0$, by \eqref{eq10},  we have 
	$	A a^2/b=bc/(A a), $ or 
	\begin{equation}\label{eq12}
		A^2= a^{q^2+2q-3}
	\end{equation} as $b=a^q, c=a^{q^2}.$
	Put $a^{q-1}=u.$ Then  $u^{q^2+q+1}=1$  and the above equation reduces to
	\begin{equation}\label{eq13}
		A ^2= u^{q+3}.
	\end{equation}
	By raising both sides of Eq.\,\eqref{eq13} to the third power and using $A^3=1, $ we get  $u^{3(q+3)}=1,$ which leads to
	\begin{equation}\label{u1} u^{3(2q-1)}=u^{q(3(q+3))-3(q^2+q+1)}=1.
		\end{equation}  
	If $m \not\equiv 2 \pmod{3},$ then $2q\not\equiv1 \pmod{7}.$
	This implies that 
	\begin{align*}
		\gcd(2q-1, q^3-1)=&\,\gcd(2q-1, 8q^3-8)\\
		=&\,\gcd(2q-1, ((2q-1)+1)^3-8)\\
		=&\,\gcd(2q-1, 7)\\
		=&\,1,		
	\end{align*}
where the first step holds since $\gcd(2q-1, 8)=1.$
	Therefore, there exist two integers $k, l$ such that $k(2q-1)+l(q^3-1)=1.$
We deduce that 
	$$u^3=u^{3\left(k(2q-1)+l(q^3-1)\right)}=\left(u^{3(2q-1)}\right)^k=1$$
	According to  Eq. \eqref{eq13},  $A^2=u^{q+3}=u^q.$ If $A=1,$ then $u^q=1=u^3.$  Since $\gcd(q,3)=1,$ $a^{q-1}=u=u^{\gcd(q, 3)}=1.$ So $b=a^q=a$ and $c=b^q=a^q=b.$ These together with Eq. \eqref{eq9} imply that $a^2=0,$ which is a contradiction. If $A$ is primitive in $\gf_{4},$
	then by Eq. \eqref{eq13}, $u^q=A^2=A+1\neq1.$ So $u\neq1. $ Since $u^3=1,$ we know that $u^2+u+1=0.
$ Since $u=a^{q-1}=b/a,$ we have that $(b/a)^2+(b/a)+1=0,$ i.e., that $b^2+ab+a^2=0.$ Raising the last equation to the $q-$th and $q^2-$th powers, we obtain that $c^2+bc+b^2=0$ and that $a^2+ca+c^2=0.$ Adding up all three equations yields that $ab+bc+ca=0.$ Thus, adding up all three equations in Eq.\eqref{eq14}, we have that $(a^2+b^2+c^2)(A+1)=0.$ Since $A+1=A^2\neq0, $ $a^2+b^2+c^2=0.$ So $ab=a^2+b^2=c^2.$ By the second equation in Eq. \eqref{eq14}, $a^2=0,$ which is impossible.
 We have shown $ A ac+c^2+A b^2\neq0$ completely.
	
	Raising both sides of Eq.\,\eqref{eq5} to $q$-th power, we have 
	\begin{equation}\label{eq6}
		(A ac+c^2+A b^2)y=(A ac+c^2+A b^2)^qz=(A ac+c^2+A b^2)^{q^2}x.
	\end{equation}
	It follows from Eqs.\,\eqref{abc} and \eqref{eq6} that 
	\begin{align} \label{th1x}
		&\left(	(A ac+c^2+A b^2)^{q+1}+A	(A ac+c^2+A b^2)^{q^2+q}+ A^2	(A ac+c^2+A b^2)^{q^2+1}\right)x \nonumber\\
		=&\,	(A ac+c^2+A b^2)^{q+1}(a+A b+A^2c).
	\end{align}
	
	If $	(A ac+c^2+A b^2)^{q+1}+A	(A ac+c^2+A b^2)^{q^2+q}+ A^2	(A ac+c^2+A b^2)^{q^2+1}\neq 0$, then
	\begin{align*}
		x=&\,\frac{(A ac+c^2+A b^2)^{q+1}(a+A b+A^2c)}{	(A ac+c^2+A b^2)^{q+1}+A 	(A ac+c^2+A b^2)^{q^2+q}+ A^2	(A ac+c^2+A b^2)^{q^2+1}}.
	\end{align*}
	
	If $	(A ac+c^2+A b^2)^{q+1}+A	(A ac+c^2+A b^2)^{q^2+q}+ A^2	(A ac+c^2+A b^2)^{q^2+1}= 0$, then  $a+A b+A^2c=0$ by Eq.\,\eqref{th1x},  and the latter condition is equivalent to $x+A y+A^2 z=0$ by Eq.\,\eqref{abc}. 
	By adding the first two equations of \eqref{eq1} , we obtain  
	$$0=ax(A^2x+A z)+b(A^2x+A z)z$$ because  $y=A^2x+Az.$  Furthermore, by   $y=A^2x+Az\neq 0$, we have  $ax=bz,$  and so $by=cx. $  
	Together with the first equation of \eqref{eq2} we get
	$$(b^2c+A ab^2+ac^2)x=ab^2c.$$
	Since  $abc\neq0,$ we  have  
	$b^2c+A ab^2+ac^2\neq0.$ Then  
	\begin{align*}
		x=&\,\frac{ab^2c}{b^2c+A ab^2+ac^2}.
	\end{align*} It follows from Lemma \ref{ff-} that $f_1(x)$ permutes $\gf_{q^3}$ and the 
	compositional inverse of $f_1(x)$ is 
	{\footnotesize
		\begin{equation*}
			f_1^{-1}(x)=\begin{cases}
				\frac{(A x^{q^2+1}+x^{2q^2}+A x^{2q})^{q+1}(x+A x^q+ A^2 x^{q^2})}{(x+A x^q+ A^2 x^{q^2})\circ (A x^{q^2+1}+x^{2q^2}+A x^{2q})^{q+1} }, \,\, & \text{if \,\,$x+A x^q+A^2x^{q^2}\neq0;$}\\
				\frac{x^{q^2+2q+1}}{x^{q^2+2q}+A x^{2q+1}+x^{2q^2+1}},  \,\, &\text{if \,\,$x+A x^q+A^2x^{q^2}=0$\,\, and \,\, $x\neq 0;$}\\
				0, \,\, &\text{if \,\,$x=0$},\\
			\end{cases}
	\end{equation*}}
which is the desired result. 

\end{proof}

\begin{remark}
	R. Gupta, P. Gahlyan and R.K.Sharma  \cite{Gupta2022new} investigated this  class of permutation trinomials over $\gf_{2^{3m}}$ in Theorem 3.2. Actually, by eliminating the indeterminate $z$ using \eqref{eq1}, they obtained 
	{\footnotesize \begin{align*}
			f(y)&:=\, Ay^4+(a+x)y^3+(bx+ab)y^2+(aAx^2+Ax^3)y+Aabx^2+Abx^3+a^2x^2=\,0,\\
			g(y)&:=\, y^4+(a^2A+Ax^2+ac+cx)y^2+(x^3+a^2x)y+A^2x^4+cAx^3+acAx^2=\,0.
	\end{align*}}
	The resultant of $f$ and $g$ with respect to $y$ was factored as 
	$$R(f, g, x):= x^4(x+a)^8(\alpha x+\beta),$$
	where 
	\begin{align*}
		\alpha&=\,ab^2A^2+bc^2A^2+ca^2A^2+a^3+b^3+c^3+abc,\\
		\beta&=\, a^2b^2A^2+a^3cA^2+abc^2A^2+c^4A^2+a^2c^2A+ab^3+a^2bc+b^2c^2+ac^3.
	\end{align*}
	By $R(f, g, x):= x^4(x+a)^8(\alpha x+\beta)=0$, they got 
	\begin{equation*}
		x=
		\begin{cases}
			a, &\text{if}\,\, bA+a=0;\\
			\beta/\alpha, & \text{if }\,\, bA+a\neq0.
		\end{cases}
	\end{equation*}
	Note that if  $ bA+a=0$, then  
	$x=\beta/\alpha=a$ still holds, and so for any $x\in \gf_{q^3}^*,$ we have $x=\beta/\alpha.$   
	Hence, by Lemma \ref{ff-}, we have that the compositional inverse of $f_1(x)$ is {\footnotesize
		\begin{equation*}
			f_1^{-1}(x)=\\
			\begin{cases}\frac{A^2(x^{2q+2}+x^{q^2+3}+x^{2q^2+q+1}+x^{4q^2})+Ax^{2q^2+2}+x^{3q+1}+x^{q^2+q+2}+x^{2q^2+2q}+x^{3q^2+1}}{A^2(x^{2q+1}+x^{2q^2+q}+x^{q^2+2})+x^3+x^{3q}+x^{3q^2}+x^{q^2+q+1}}, &\text {if}\,\, x\neq0;\\
				0,\,\, &\text {if}\,\, x=0.\
			\end{cases}
		\end{equation*}
	}
\end{remark}

\begin{theorem}
	For a positive integer $m$, let $q=2^m$ and $A \in \gf_{q}^*$ with $A^3=1.$  Then the polynomial $f_2(x)=x+A x^{q^3-q^2+q}+x^{q^2+q-1}$ is a permutation polynomial over  $\gf_{q^3}$ if and only if $m\not\equiv 1 \pmod{3}.$ Moreover, if $f_2(x)$ permutes $\gf_{q^3}$, then 
	the compositional inverse of $f_2(x)$ is {\footnotesize
		\begin{equation*}
			f_2^{-1}(x)=\begin{cases}
				\frac{(x^2+A x^{q^2+1}+A x^{2q})^{q+1}(x+A^2x^q+A x^{q^2})}{(x+A^2x^q+A x^{q^2})\circ (x^2+A x^{q^2+1}+A x^{2q})^{q+1}}, \,\, & \text{if\,\,$x+A^2 x^q+A x^{q^2}\neq0;$}\\
				\frac{x^{q^2+2q+1}}{x^{q^2+2q}+A x^{2q^2+1}+x^{2q+1}},  \,\, &\text{if\,\,$x+A ^2x^q+A x^{q^2}=0$\,\, and \,\, $x\neq 0;$}\\
				0, \,\, &\text{if\,\,$x=0$}.\\
			\end{cases}
	\end{equation*}}
\end{theorem}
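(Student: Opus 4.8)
The plan is to run the same argument as in the proof of Theorem~\ref{th21}. We apply Lemma~\ref{ff-} with $\psi_1(x)=x$, $\psi_2(x)=x^q$, $\psi_3(x)=x^{q^2}$, and write $a=\varphi_1(x)=f_2(x)$, $b=\varphi_2(x)=f_2^q(x)$, $c=\varphi_3(x)=f_2^{q^2}(x)$, so that $c=b^q=a^{q^2}$. The ``only if'' part is already contained in \cite{Gupta2022new}: when $m\equiv 1\pmod 3$ the trinomial $f_2$ does not permute $\gf_{q^3}$ and $0$ is its unique root. Hence we may assume $m\not\equiv 1\pmod 3$ and must show that $f_2$ permutes $\gf_{q^3}^*$. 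For $x\ne0$ (equivalently $abc\ne0$), the substitution $y=x^q$, $z=y^q$ turns $f_2(x)=a$ and its two Frobenius conjugates into a cyclic system which, after clearing denominators, has the same shape as \eqref{eq1}.

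First I would exploit $A^3=1$: multiplying the three equations of the unreduced system by $1$, $A^2$, $A$ in turn and adding, the cubic monomial $xyz$ cancels in characteristic~$2$, leaving the linear relation
\[
x+A^2y+A z=a+A^2b+A c,
\]
the analogue of \eqref{abc} (the weights $1,A^2,A$ here matching the factor $x+A^2x^q+Ax^{q^2}$ in the claimed $f_2^{-1}$). Adding a suitable pair of the cleared equations and substituting this relation --- so that, as in the passage to \eqref{eq4}, the $xyz$ term disappears --- then raising to the $q$-th power as in \eqref{eq41} and eliminating one of $xy,yz,zx$, should leave a relation equivalent to $P\,y=P^q z=P^{q^2}x$ (the analogue of \eqref{eq5}--\eqref{eq6}), where $P=P(a,b,c)=a^2+Aac+Ab^2$ --- the trinomial which, specialised at $a=x$, $b=x^q$, $c=x^{q^2}$, equals $x^2+Ax^{q^2+1}+Ax^{2q}$, the quantity raised to the power $q+1$ in the statement.

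The crux --- and the step I expect to be the main obstacle --- is to prove $P\ne0$. Supposing $P=0$ and raising to the $q$- and $q^2$-th powers gives a cyclic system in $a,b,c$ analogous to \eqref{eq14}; dividing its three members by $a^2$, $b^2$, $c^2$ respectively rewrites them as three equations in $u:=a^{q-1}$ (which satisfies $u^{q^2+q+1}=1$) and in $A$. Eliminating between these should yield $A^2=u^{q-2}$, and cubing, using $A^3=1$, gives $u^{3(q-2)}=1$. Since $q\equiv 2\pmod{q-2}$ one has $q^3-1\equiv 7\pmod{q-2}$, hence
\[
\gcd(q-2,\,q^3-1)=\gcd(q-2,\,7);
\]
and since the order of $2$ modulo $7$ is $3$, the hypothesis $m\not\equiv 1\pmod 3$ gives $2^{m}\not\equiv 2\pmod 7$, i.e.\ $7\nmid q-2$, so this gcd equals $1$. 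Choosing integers $k,l$ with $k(q-2)+l(q^3-1)=1$ and using $u^{q^3-1}=1$ then forces $u^3=1$; plugging this into one of the relations obtained in the elimination (e.g.\ $u^3=Au^2+1$) gives $Au^2=0$, which is impossible. Hence $P\ne 0$. (Alternatively, the contradiction may be reached by the same case split on whether $A=1$ or $A$ is a primitive element of $\gf_4$, exactly as in Theorem~\ref{th21}.)

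With $P\ne0$ in hand, the chain $P\,y=P^q z=P^{q^2}x$ together with the linear relation yields the single equation
\[
\bigl(P^{q+1}+A^2P^{q^2+q}+A P^{q^2+1}\bigr)\,x=P^{q+1}\,(a+A^2b+A c),
\]
the analogue of \eqref{th1x}. When the coefficient of $x$ is nonzero this writes $x$ as a rational function of $a,b,c$, and substituting $(a,b,c)=(x,x^q,x^{q^2})$ as prescribed by Lemma~\ref{ff-} gives the first branch of $f_2^{-1}$. When that coefficient vanishes, the equation forces $a+A^2b+A c=0$, i.e.\ $x+A^2y+A z=0$; substituting this back into the cleared system (as in the last part of the proof of Theorem~\ref{th21}) produces linear relations among $x,y,z$ with coefficients in $\{a,b,c\}$, and feeding these into one of the original equations yields the rational expression of the second branch, the case $x=0$ being trivial. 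Collecting the pieces through Lemma~\ref{ff-} then gives the stated formula for $f_2^{-1}(x)$.
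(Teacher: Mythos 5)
Your proposal follows essentially the same route as the paper: the same maps $\psi_i$, the same linear relation $x+A^2y+Az=a+A^2b+Ac$, the same elimination leading to $Py=P^qz=P^{q^2}x$ with $P=a^2+Aac+Ab^2$, and the same key non-vanishing argument via $u^{3(q-2)}=1$ and $\gcd(q-2,q^3-1)=\gcd(q-2,7)=1$ when $m\not\equiv1\pmod 3$. The only small imprecision is the intermediate claim $A^2=u^{q-2}$ (the elimination actually gives $A^2=u^{2q+3}$, which still yields $u^{3(q-2)}=1$ after cubing and reducing modulo $u^{q^2+q+1}=1$); your fallback to the $A=1$ versus $A$ primitive in $\gf_4$ case split is exactly how the paper (by reference to its first theorem) finishes.
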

\begin{proof}As it has been shown in \cite{Gupta2022new}  that if $m\equiv 1 \pmod{3},$ then $f_2(x)$ is not a permutation polynomial over $\gf_{q^3}$ and  $f_2(x)$ has a unique  root $0$ in $\gf_{q^3}.$  Now, we only need to show  that if $m \not \equiv 1 \pmod{3}, $  then $f_2(x)$ permutes $\gf_{q^3}^*.$
	
	Let $\psi_1(x)=x, \psi_2(x)=x^q, \psi_3(x)=x^{q^2}, $ $ \varphi_1(x)=\psi_1(x)\circ f_2(x)=f_2(x), \varphi_2(x)=\psi_2(x)\circ f_2(x)=f_2^q(x)$, and $  \varphi_3(x)=\psi_3(x)\circ f_2(x)=f_2^{q^2}(x).$ For simplicity, put $\varphi_1(x)=a, \varphi_2(x)=b, \varphi_3(x)=c.$
	Then we have $c=b^q=a^{q^2}.$ We  assume that $x\neq 0$ so that  $abc\neq0.$ By substituting $y=x^q,$ $z=y^q, $ we obtain the system of equations 
	\begin{equation}\label{eq211}
		\begin{cases}
			x+\frac{A xy}{z}+\frac{yz}{x}=a,\\
			y+\frac{A yz}{x}+\frac{xz}{y}=b,\\
			z+\frac{A xz}{y}+\frac{xy}{z}=c,\\
		\end{cases}
	\end{equation}
	which can be rewritten as 
	\begin{equation}\label{eq111}
		\begin{cases}
			x^2z+A x^2y+z^2y=axz,\\
			y^2x+A y^2z+x^2z=byx,\\
			z^2y+A z^2x+y^2x=cyz.\\	
		\end{cases}
	\end{equation}
	Since $A ^3=1,$ it follows from \eqref{eq211} that  
	\begin{equation}
		\label{abc11}
		x+A^2 y+A  z=a+A^2 b+A c.
	\end{equation} 
	By adding the  first two equations of \eqref{eq111}, along with  Eq.\,\eqref{abc11}, we have 
	\begin{align}	\label{eq411}
		0=&\, A x^2y+z^2y+ y^2x+A y^2z+axz+byz \nonumber\\
		=&\, xy(A x+ y+b)+yz(A y+z) +axz \nonumber\\
		=&\, xy(A a +A^2c+A^2z)+yz(A^2 a+A b+c+A ^2x)+axz \nonumber\\
		=&\,(A a+A^2c)xy+(A^2 a+A b+c)xz+axz.
	\end{align}
	By raising both sides of Eq.\,\eqref{eq411} to the $q$-th power, we obtain 
	\begin{equation}\label{eq4111}
		(A a+A^2c)^qyz+(A^2 a+A b+c)^qxz+bxy=0.
	\end{equation}
	Combining Eqs.\,\eqref{eq411} and \eqref{eq4111},  and then eliminating $yz$, 
	we obtain  $$(a^2+A ac+A b^2)xy=(a^2+A ac+A b^2)^qxz,$$ or 
	\begin{equation}\label{eq511}
		(a^2+A ac+A b^2)y=(a^2+A ac+A b^2)^qz,
	\end{equation}
	as $x\neq 0.$ 
	
	We claim that $a^2+A ac+A b^2\neq 0.$	Suppose, on the contrary,  that 
	\begin{equation}
		\label{eq911}
		a^2+A ac+A b^2= 0.
	\end{equation}	
	Raising both sides of Eq.\,\eqref{eq911} to the $q$-th power and $q^2$-th power, respectively, together with Eq.\,\eqref{eq911}, 
	we have the system of equations
	\begin{equation}
		\label{eq1411}
		\begin{cases}
			a^2+	A ac+A b^2=0,\\
			b^2+	A ab+A c^2=0,\\
			c^2+A bc+A a^2=0.
		\end{cases}
	\end{equation}
	This implies that 
	\begin{equation*}
		\begin{cases}
			c^2+A bc+A a^2=0,\\
			A ac+A ^2 ab+bc=0,
		\end{cases}
	\end{equation*} or 
	\begin{equation}
		\label{eq1011}
		\begin{cases}
			c( c+A b)	+A a^2=0,\\
			A a (c+A b)+bc=0.
		\end{cases}
	\end{equation}
	Since $abc\neq 0$, we obtain from Eq.\,\eqref{eq1011} that either $	A a^2/c=bc/(A a),$ or
	\begin{equation}
		\label{eq1211}
		A^2= a^{2q^2+q-3}
	\end{equation} as $b=a^q, c=a^{q^2}.$
	Put $a^{q-1}=u.$ Then $u^{q^2+q+1}=1$  and the above equation reduces to 
	\begin{equation}
		\label{eq1311}
		A ^2= u^{2q+3}.
	\end{equation}
	By raising both sides of Eq.\,\eqref{eq1311} to the third power and using $A^3=1$,  we obtain  $u^{3(2q+3)}=1$, 	
	which leads to $u^{3(q-2)}=u^{q(3(2q+3))-6(q^2+q+1)}=1.$ 
	Moreover, we have that $$\gcd(q-2, q^3-1)=\gcd(q-2, \left((q-2)+2\right)^3-1)=\gcd(2^m-2, 7)=1,$$
	where the last step holds since $2^m\not\equiv 2 \pmod{7}$ when $m\not \equiv 1 \pmod{3}.$ Thus, $k(q-2)+l(q^3-1)=1$ for some $k, l \in \mathbb{Z}$. So we deduce that 
	$$u^3=u^{3\left(k(q-2)+l(q^3-1)\right)}=\left(u^{3(q-2)}\right)^k=1.$$	

Since the remaining proof is similar to Theorem \ref{th21}, we only give the results here. 

	It follows from Lemma \ref{ff-} that 
	$f_2(x)$ is a permutation polynomial over $\gf_{q^3}$ and  the compositional inverse of $f_2(x)$ is 
	{\footnotesize
		\begin{equation*}
			f_2^{-1}(x)=\begin{cases}
				\frac{(x^2+A x^{q^2+1}+A x^{2q})^{q+1}(x+A^2x^q+A x^{q^2})}{(x+A^2x^q+A x^{q^2})\circ (x^2+A x^{q^2+1}+A x^{2q})^{q+1}}, \,\, & \text{if\,\, $x+A^2 x^q+A x^{q^2}\neq0;$}\\
				\frac{x^{q^2+2q+1}}{x^{q^2+2q}+A x^{2q^2+1}+x^{2q+1}},  \,\, &\text{if\,\, $x+A ^2x^q+A x^{q^2}=0$\,\, and \,\, $x\neq 0;$}\\
				0, \,\, &\text{if\,\,$x=0$}.\\
			\end{cases}
	\end{equation*}}
	This completes the proof.
	
\end{proof}

\begin{remark}
	R. Gupta, P. Gahlyan and R.K.Sharma  \cite{Gupta2022new} investigated this  class of permutation trinomials over $\gf_{2^{3m}}$ in Theorem 3.4. Actually, by eliminating the indeterminate $z$ using \eqref{eq111}, they obtained 
	{\footnotesize \begin{align*}
			f(x)&:=\, Ax^4+(b+y)x^3+(ay+ab)x^2+(bAy^2+Ay^3)x+Aaby^2+Aay^3+b^2y^2=\,0,\\
			g(x)&:=\, x^4+(b^2A+Ay^2+bc+cy)x^2+(y^3+b^2y)x+A^2y^4+cAy^3+bcAy^2=\,0.
	\end{align*}}
	The resultant of $f$ and $g$ with respect to $x$ was factored as 
	$$R(f, g, x):= y^4(y+b)^8(\alpha y+\beta),$$
	where 
	\begin{align*}
		\alpha&=\,ba^2A^2+b^2cA^2+c^2aA^2+a^3+b^3+c^3+abc,\\
		\beta&=\, a^2b^2A^2+b^3cA^2+abc^2A^2+c^4A^2+b^2c^2A+ba^3+b^2ac+a^2c^2+bc^3.
	\end{align*}
	By $R(f, g, x):= y^4(y+b)^8(\alpha y+\beta)=0$, they got 
	\begin{equation*}
		y=
		\begin{cases}
			b, &\text{if}\,\, cA+a=0;\\
			\beta/\alpha, & \text{if }\,\, cA+a\neq0.
		\end{cases}
	\end{equation*}
	Note that if  $ bA+a=0$, then 
	$y=b=\beta/\alpha $ still holds,  and so for  any $y\in \gf_{q^3}^*,$ we have $y=\beta/\alpha. $  This implies $x=\beta^{q^2}/\alpha^{q^2}$, as $y=x^q. $ Moreover, since $c=b^q=a^{q^2},$ we have 
	{\footnotesize
		\begin{align*}
			x=&\, \frac{\left(a^2b^2A^2+b^3cA^2+abc^2A^2+c^4A^2+b^2c^2A+ba^3+b^2ac+a^2c^2+bc^3\right)^{q^2}}{\left(ba^2A^2+b^2cA^2+c^2aA^2+a^3+b^3+c^3+abc\right)^{q^2}}\\
			=&\, \frac{a^2c^2A^2+a^3bA^2+ab^2cA^2+b^4A^2+a^2b^2A+ac^3+a^2bc+b^2c^2+ab^3)}{(ba^2A^2+b^2cA^2+c^2aA^2+a^3+b^3+c^3+abc)}.
		\end{align*}
	}
	Hence, by Lemma \ref{ff-}, we have that  the compositional inverse of $f_2(x)$ is {\footnotesize
		\begin{equation*}
			f_2^{-1}(x)=\\
			\begin{cases}  \frac{A^2(x^{2q^2+2}+x^{q+3}+x^{q^2+2q+1}+x^{4q})+Ax^{2q+2}+x^{3q^2+1}+x^{q^2+q+2}+x^{2q^2+2q}+x^{3q+1}}{A^2(x^{2+q}+x^{2q+q^2}+x^{2q^2+1})+x^3+x^{3q}+x^{3q^2}+x^{q^2+q+1}}, \,\, &\text{if}\,\, x\neq0;\\
				0, \,\, &\text{if}\,\, x=0.		
			\end{cases}
	\end{equation*}}
\end{remark}

\begin{theorem}\label{thm3}
	Let $q$ be a prime power and $A \in \gf_{q}*$. Then the polynomial 
	$$f_3(x)=x+A x^{q^2-q+1}+A^2x^{q^2}$$ is a  permutation polynomial  over $\gf_{q^3}$ if and only if $A^3\neq1.$ Moreover, if $f_3(x)$ permutes $\gf_{q^3}$, the compositional inverse of $f_3(x)$ is 
\begin{equation*}
	f_3^{-1}(x)=
	(A^2x+x^q+Ax^{q^2})^{q^3-2}x^{q+1}.
\end{equation*}
	
\end{theorem}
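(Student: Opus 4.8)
The plan is to run the local method of Lemma~\ref{ff-} in exactly the same way as in the proof of Theorem~\ref{th21}. For the ``only if'' direction I would simply quote \cite{Gupta2022new}: when $A^{3}=1$, $f_{3}$ is not a permutation polynomial of $\gf_{q^{3}}$ (and $0$ is its unique root there). So assume $A^{3}\neq 1$; the task is to prove $f_{3}$ permutes $\gf_{q^{3}}$ and to identify the inverse. Take $\psi_{1}(x)=x$, $\psi_{2}(x)=x^{q}$, $\psi_{3}(x)=x^{q^{2}}$, and put $a=\varphi_{1}(x)=f_{3}(x)$, $b=\varphi_{2}(x)=f_{3}(x)^{q}$, $c=\varphi_{3}(x)=f_{3}(x)^{q^{2}}$, so $c=b^{q}=a^{q^{2}}$. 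Substituting $y=x^{q}$, $z=x^{q^{2}}$ and using $x^{q^{3}}=x$, the relations $\varphi_{i}(x)=\psi_{i}\circ f_{3}(x)$ become
\begin{equation*}
\begin{cases}
x+\frac{Axz}{y}+A^{2}z=a,\\
y+\frac{Axy}{z}+A^{2}x=b,\\
z+\frac{Ayz}{x}+A^{2}y=c,
\end{cases}
\end{equation*}
and, after multiplying the three equations by $y$, $z$, $x$ respectively,
\begin{equation*}
\begin{cases}
xy+Axz+A^{2}yz=ay,\\
Axy+A^{2}xz+yz=bz,\\
A^{2}xy+xz+Ayz=cx;
\end{cases}
\end{equation*}
call these $(\mathrm{i})$, $(\mathrm{ii})$, $(\mathrm{iii})$ from top to bottom.

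First I would record that $0$ is the only root of $f_{3}$ in $\gf_{q^{3}}$: if $x\neq 0$ and $a=b=c=0$, then $A\cdot(\mathrm{iii})-(\mathrm{i})$ collapses to $(A^{3}-1)xy=0$, forcing $xy=0$, which is impossible. Hence we may assume $x\neq 0$, so $abc\neq 0$ and $y,z\neq 0$. Since $xy$, $xz$, $yz$ appear in $(\mathrm{i})$, $(\mathrm{ii})$, $(\mathrm{iii})$ with the cyclically shifted coefficient rows $(1,A,A^{2})$, $(A,A^{2},1)$, $(A^{2},1,A)$, the combinations $A\cdot(\mathrm{iii})-(\mathrm{i})$, $A\cdot(\mathrm{i})-(\mathrm{ii})$, $A\cdot(\mathrm{ii})-(\mathrm{iii})$ each eliminate two of the three monomials and give
\begin{equation*}
(A^{3}-1)xy=Acx-ay,\qquad (A^{3}-1)yz=Aay-bz,\qquad (A^{3}-1)xz=Abz-cx.
\end{equation*}
Dividing these by $xy$, $yz$, $xz$ respectively and writing $u=1/x$, $v=1/y$, $w=1/z$ converts them into the linear system
\begin{equation*}
au-Acv=1-A^{3},\qquad -Abu+cw=1-A^{3},\qquad bv-Aaw=1-A^{3},
\end{equation*}
whose coefficient determinant is $abc(A^{3}-1)\neq 0$. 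Solving for $u$ by Cramer's rule yields $u=\frac{A^{2}a+b+Ac}{ab}$; in particular $A^{2}a+b+Ac\neq 0$ and $x=\frac{ab}{A^{2}a+b+Ac}$.

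To finish, set $F(x_{1},x_{2},x_{3})=(A^{2}x_{1}+x_{2}+Ax_{3})^{q^{3}-2}x_{1}x_{2}\in\gf_{q^{3}}[x_{1},x_{2},x_{3}]$. If $x=0$ then $a=b=c=0$ and $F(a,b,c)=0=x$; if $x\neq 0$ then $A^{2}a+b+Ac\neq 0$, so $(A^{2}a+b+Ac)^{q^{3}-2}=(A^{2}a+b+Ac)^{-1}$ and $F(a,b,c)=\frac{ab}{A^{2}a+b+Ac}=x$. Thus $x=F(\varphi_{1}(x),\varphi_{2}(x),\varphi_{3}(x))$ holds identically on $\gf_{q^{3}}$, and Lemma~\ref{ff-} gives that $f_{3}$ is a permutation polynomial with
\begin{equation*}
f_{3}^{-1}(x)=F(\psi_{1}(x),\psi_{2}(x),\psi_{3}(x))=(A^{2}x+x^{q}+Ax^{q^{2}})^{q^{3}-2}x^{q+1}.
\end{equation*}

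The one error-prone point is the bookkeeping in the two $3\times 3$ elimination steps: the coefficient matrix of $(xy,xz,yz)\mapsto(ay,bz,cx)$ is a ``reverse circulant'' with determinant $-(1-A^{3})^{2}$, not $+(1-A^{3})^{2}$, so a careless sign there would flip the final formula — one avoids this by performing the three pairwise eliminations directly, as above, never invoking that determinant. I foresee no conceptual obstacle beyond this; the only genuinely external ingredient is the ``only if'' direction, taken from \cite{Gupta2022new}.
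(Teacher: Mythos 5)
Your proof is correct and follows essentially the same route as the paper: both reduce the conjugate trilinear system to a linear system in $1/x$, $1/y$, $1/z$ with determinant $abc(A^3-1)\neq 0$, solve by Cramer's rule to get $x=ab/(A^2a+b+Ac)$, and then invoke Lemma~\ref{ff-} (your three pairwise eliminations yield exactly the paper's system \eqref{th3eq9}, which the paper obtains by one elimination followed by Frobenius). The only substantive difference is that you outsource the ``only if'' direction to \cite{Gupta2022new}, whose Theorem~4.1 is stated for odd $q$, whereas the paper gives a short direct argument valid in every characteristic (namely $f_3(1)=f_3(0)=0$ when $A^2+A+1=0$, and an explicit nonzero root $\beta$ when $A=1$); it would be cleaner to include that one-line verification rather than rely on the citation.
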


\begin{proof}
	Let $\psi_1(x)=x, \psi_2(x)=x^q, \psi_3(x)=x^{q^2}, $ $ \varphi_1(x)=\psi_1(x)\circ f_3(x)=f_3(x), \varphi_2(x)=\psi_2(x)\circ f_3(x)=f_3^q(x)$, and $  \varphi_3(x)=\psi_3(x)\circ f_3(x)=f_3^{q^2}(x).$ For simplicity, put $\varphi_1(x)=a, \varphi_2(x)=b, \varphi_3(x)=c.$
	Then we have $c=b^q=a^{q^2}.$
	We assume that  $A^3-1=(A-1)(A^2+A+1)=0$.
	If $A^2+A+1=0$, then $f_3(1)=f_3(0)=0.$ If $A=1,$ then for any $\alpha \in\{x \in \gf_{q^3}: x^{q+1}+x^q+1=0\}$, we have $\alpha^{q^2+q+1}=1$.
	Therefore,  there exists $\beta\in \gf_{q^3}^*$ such that $\alpha=\beta^{q-1}.$
	This implies 
	$$f_3(\beta)=\beta+\beta^{q^2-q+1}+\beta^{q^2}=\beta(1+\beta^{q^2-q}+\beta^{q^2-1})=\beta(1+\alpha^q+\alpha^{q+1})=0.$$
	Hence, $f_3(x) $ is not a permutation polynomial over $\gf_{q^3}.$ 
	
	  Now  we  assume that $A^3\neq1.$  
	For any $x\in \gf_{q^3}^*$, by substituting $y=x^q,$ $z=y^q$, we obtain the system of equations
	\begin{equation*}
		\begin{cases}
			x+A\frac{xz}{y}+A^2z&=a,\\
			y+A\frac{xy}{z}+A^2x&=b,\\
			z+A\frac{yz}{x}+A^2y&=c,\\
		\end{cases}
	\end{equation*}
	or, we can rewrite as 
	\begin{equation}\label{th3eq1}
		\begin{cases}
			xy+Axz+A^2yz&=ay,\\
			yz+Axy+A^2xz&=bz,\\
			xz+Ayz+A^2xy&=cz.\\
		\end{cases}
	\end{equation}
	By the first two equations of  \eqref{th3eq1}, we have 
	\begin{equation}\label{th3eq2}
		yz-A^3yz=bz-aAy.
	\end{equation}
	Since $x\in \gf_{q^3}^*$, we get $yz\neq0,$ and so by Eq. \eqref{th3eq2}, we obtain 
	\begin{equation}\label{th3eq3}
		1-A^3=\frac{b}{y}-\frac{Aa}{z}.
	\end{equation}
	By raising both sides of Eq.\,\eqref{th3eq3} to the $q$-th power  and $q^2$-th power, respectively, we have the system of equations 
	\begin{equation}\label{th3eq9}
		\begin{cases}
			1-A^3&=\frac{b}{y}-\frac{Aa}{z},\\
			1-A^3&=\frac{c}{z}-\frac{Ab}{x},\\
			1-A^3&=\frac{a}{x}-\frac{Ac}{y}.\\
		\end{cases}
	\end{equation}
	Moreover,  
	$1-A^3\neq0$ implies that for any $x\in \gf_{q^3}^*$, we have $abc\neq0.$
	
	Suppose, on the contrary, that if $1-A^3\neq0,$ there exists $x_0\in \gf_{q^3}^*$ such that $a=0.$ Then we have $x_0+Ax_0^{q^2-q+1}+A^2x_0^{q^2}=0,$ or 
	\begin{equation}\label{th3eq4}
		x_0^{q+1}+Ax_0^{q^2+1}+A^2x_0^{q^2+q}=0.
	\end{equation}
	By raising both sides of Eq.\,\eqref{th3eq4} to $q$-th power, we have 
	\begin{equation}\label{th3eq5}
		x_0^{q^2+q}+Ax_0^{1+q}+A^2x_0^{q^2+1}=0.
	\end{equation} It follows from Eqs.\,\eqref{th3eq4} and \eqref{th3eq5} that 
	$(1-A^3)x_0^{q^2+q}=0,$ and so $1-A^3=0,$ which is a contraction. 
	
	Considering the system  \eqref{th3eq9} as a linear system with variables $(1/x, 1/y, 1/z)$, the determinant of the coefficient matrix of \eqref{th3eq9} is non-zero. Then the system \eqref{th3eq9} has a unique solution over $\gf_{q^3}$ and we have 
	$$1/x=(acA^2+c^2A+bc)(1-A^3)\cdot\left((1-A^3)abc\right)^{-1},$$ or 
	\begin{align} \label{eq3thm3}
		x&=\, (aA^2+cA+b)^{-1}ab.
	\end{align} 
Since $f(0)=0$, it follows from Lemma \ref{ff-} that $f_3(x)$ permutes $\gf_{q^3}$ and  the compositional inverse of $f_3(x)$ is 
	\begin{equation*}
		f_3^{-1}(x)=
				(A^2x+x^q+Ax^{q^2})^{q^3-2}x^{q+1}.
	\end{equation*}
	We are done.
\end{proof}

\begin{remark}
	
Despite the proof of Theorem \ref{thm3}  having been previously presented in Theorem 1 of \cite{wu2024permutation}, we provide the proof here as well. This is due to the fact that,  
regardless of whether  $A^3$ equals $1$ or not, 
 the equation $x(A^2a+b+Ac)=ab$ (or $ x(A^2\varphi_1(x)+\varphi_2(x)+A\varphi_3(x)=\varphi_1(x)\varphi_2(x)$  in Theorem 1 of \cite{wu2024permutation} )
holds.
 However,  Theorem \ref{thm3} calculates the unique solution of the system \eqref{th3eq9} 
  under the condition that 
  $A^3 \neq 1$ to obtain \eqref{eq3thm3}.

 Additionally, R. Gupta, P. Gahlyan and R.K.Sharma  \cite{Gupta2022new} investigated this  class of permutation trinomials over $\gf_{q^{3m}}$ in Theorem 4.1, where $q$ is an odd prime power. We can see that if $q$ is a power of $2$, the result still holds.  
	
	Moreover,  R. Gupta, P. Gahlyan and R.K.Sharma  \cite{Gupta2022new} derived the compositional inverse of permutation polynomial $f_3(x)$ over $\gf_{q^3}$. By eliminating the indeterminate $z$ using \eqref{th3eq1}, they obtained 
	{\footnotesize \begin{align*}
			f(y)&:=\, (Ax^2+A^2bx-A^4x^2+cx)y+A^2cx^2-bcx=\,0,\\
			g(y)&:=\, (A^3x-x+a)y^2+(xb-ab+A^2ax)y=\,0.
	\end{align*}}
	The resultant of $f$ and $g$ with respect to $y$ was factored as 
	$$R(f, g, x):=Acx^2(A^2x-b)\left((A^3-1)x-bA\right)\left((aA^2+cA+b)x-ab\right).$$
	By $R(f, g, x):=Acx^2(A^2x-b)\left((A^3-1)x-bA\right)\left((aA^2+cA+b)x-ab\right)=0$, they got 
	\begin{equation*}
		x=
		\begin{cases}
			a, &\text{if}\,\,  a\in \gf_q^* \,\, \text{and} \,\,  A=-1;\\
			(A^2+A+1)^{-1}a, & \text{if }\,\,  a\in \gf_q^* \,\, \text{and} \,\,  A\neq-1;\\
			A^{-2}b, &\text{if}\,\,  a\in \gf_{q^3}^*\setminus\gf_{q}^* \,\, \text{and} \,\,  a+bA=0;\\
			(aA^2+cA+b)^{-1}ab, &\text{if}\,\,  a\in \gf_{q^3}^*\setminus\gf_{q}^* \,\, \text{and} \,\,  a+bA\neq0.
		\end{cases}
	\end{equation*}
	
	Note that if   $a\in \gf_q^* $ and $ A=-1,$ we  have $a=b=c, $ and so  $x=a=(aA^2+cA+b)^{-1}ab.$ 	
	If $a\in \gf_q^* $ and $ A\neq-1,$ we   have $a=b=c, $ and so $x=(A^2+A+1)^{-1}a=(aA^2+cA+b)^{-1}ab.$

	If $ a\in \gf_{q^3}^*\setminus\gf_{q}^*$ and $a+bA=0$, then  $b+cA=0,$ and so $x=	A^{-2}b=(aA^2+cA+b)^{-1}ab.$
	Hence,  for any $x\in \gf_{q^3}^*,$ $x=(aA^2+cA+b)^{-1}ab$ holds. 
	
Since $f(0)=0$, it follows from Lemma \ref{ff-} that  the compositional inverse of $f_3(x)$ is 
		\begin{equation*}
		f_3^{-1}(x)=
		(A^2x+x^q+Ax^{q^2})^{q^3-2}x^{q+1}.
	\end{equation*}
\end{remark}

\noindent
{\bf Acknowledgments}
\\

P. Yuan was supported by the National Natural Science Foundation of China (Grant No. 12171163), Guangdong Basic and Applied Basic Research Foundation (Grant No. 2024A1515010589). D. Wu was supported by Guangdong Basic and Applied Basic Research Foundation (Grant No. 2020A1515111090). 
\\

\noindent

\end{document}